\documentclass[10pt]{amsart}
\usepackage[utf8]{inputenc}

\usepackage{amssymb,amsthm,amsmath}
\usepackage{enumerate}
\usepackage{graphicx,color}
\usepackage[hidelinks]{hyperref}

\newcommand{\dd}{\mathrm{d}}
\newcommand{\E}{\mathbb{E}}
\newcommand{\1}{\textbf{1}}
\newcommand{\R}{\mathbb{R}}

\newcommand{\p}[1]{\mathbb{P}\left( #1 \right)}

\DeclareMathOperator{\Var}{Var}

\parindent = 0 pt
\parskip = 12 pt
\usepackage[paper=a4paper, left=1.5in, right=1.5in, top=1.4in, bottom=1.4in]{geometry}
%\linespread{1.5}
\pagestyle{plain}

\newtheorem{theorem}{Theorem}
\newtheorem{lemma}[theorem]{Lemma}

\newtheorem{proposition}[theorem]{Proposition}
\theoremstyle{remark}
\newtheorem{remark}[theorem]{Remark}

\theoremstyle{definition}

\title{Tail bounds for sums of independent two-sided exponential random variables}

\author{Jiawei Li}
\address{(J.L.) Carnegie Mellon University; Pittsburgh, PA 15213, USA.}
\email{jiaweil4@andrew.cmu.edu}

\author{Tomasz Tkocz}
\address{(T.T.) Carnegie Mellon University; Pittsburgh, PA 15213, USA.}
\email{ttkocz@math.cmu.edu}

\thanks{TT's research supported in part by NSF grant DMS-1955175.}

\date{\today}

\begin{document}

\begin{abstract}
We establish upper and lower bounds with matching leading terms for tails of weighted sums of two-sided exponential random variables. This extends Janson's recent results for one-sided exponentials.
\end{abstract}

\maketitle

\bigskip

\begin{footnotesize}
\noindent {\em 2010 Mathematics Subject Classification.} Primary 60E15; Secondary 60G50.

\noindent {\em Key words. exponential distribution, gamma distribution, concentration, tail bounds, sums of independent random variables.} 
\end{footnotesize}

\bigskip

\section{Introduction}

Concentration inequalities establish conditions under which random variables are \emph{close} to their \emph{typical values} (such as the expectation or median) and provide quantitative probabilistic bounds. Their significance cannot be overestimated, both across probability theory and in applications in related areas (see \cite{BDR, BLM}). Particularly, such inequalities often concern sums of independent random variables.

Let $X_1, \ldots, X_n$ be independent exponential random variables, each with mean $1$. Consider their weighted sum $S = \sum_{i=1}^n a_iX_i$ with some positive weights $a_1, \ldots, a_n$. Janson in \cite{Jan} showed the following concentration inequality: for every $t > 1$,
\begin{equation}\label{eq:J}
\frac{1}{2e\alpha}\exp\big(-\alpha(t-1)\big) \leq \p{S \geq t \E S} \leq \frac{1}{t}\exp\big(-\alpha(t-1-\log t)\big),
\end{equation}
where $\alpha = \frac{\E S}{\max_{i \leq n} a_i}$ (in fact, he derived \eqref{eq:J} from its analogue for the geometric distribution). Note that as $t \to \infty$, the lower and upper bounds are of the same order $e^{-\alpha t + o(t)}$. Moreover, $e^{-\alpha t} = \p{X_1 > t\frac{\E S}{\max_{i \leq n} a_i}}$. In words, the asymptotic behaviour of the tail of the sum $S$ is the same as that of one summand carrying the largest weight. 

The goal of this short note is to exhibit that the same behaviour holds for sums of two-sided exponentials (Laplace). Our main result reads as follows.

\begin{theorem}\label{thm:sym-exp}
Let $X_1, \ldots, X_n$ be independent standard two-sided exponential random variables (i.e. with density $\frac{1}{2}e^{-|x|}$, $x \in \R$). Let $S = \sum_{i=1}^n a_iX_i$ with $a_1,\ldots,a_n$ positive. For every $t > 1$,
\begin{equation}\label{eq:sym-exp}
\frac{1}{57}\frac{1}{\sqrt{\alpha t}}\exp\big(-\alpha t\big) \leq \p{S > t\sqrt{\Var(S)}} \leq \exp\left(-\frac{\alpha^2}{2}h\left(\frac{2t}{\alpha}\right)\right), 
\end{equation}
where $\alpha = \frac{\sqrt{\Var(S)}}{\max_{i \leq n} a_i}  = \frac{\sqrt{2\sum_{i=1}^n a_i^2}}{\max_{i \leq n} a_i}$, $h(u) = \sqrt{1+u^2}-1-\log\frac{1+\sqrt{1+u^2}}{2}$, $u > 0$.
\end{theorem}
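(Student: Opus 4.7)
The plan is to prove the upper bound via the Cram\'er-Chernoff method together with a convexity estimate that reduces the multivariate optimization to a one-dimensional one whose minimum is exactly $\frac{\alpha^2}{2}h(2t/\alpha)$, and to obtain the lower bound from a symmetry argument isolating the single dominant summand.

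For the upper bound, the MGF of a standard two-sided exponential is $\E e^{\theta X_i}=(1-\theta^2)^{-1}$ for $|\theta|<1$, so $\E e^{\theta S}=\prod_i(1-a_i^2\theta^2)^{-1}$ for $|\theta|<1/b$, where $b:=\max_i a_i$. Markov's inequality then gives $\log\p{S>x}\le -\theta x-\sum_i\log(1-a_i^2\theta^2)$. The key observation I will exploit is that $y\mapsto -\log(1-y)/y$ is nondecreasing on $(0,1)$, whence
\[
-\log(1-a_i^2\theta^2)\;\le\;\frac{a_i^2}{b^2}\bigl(-\log(1-b^2\theta^2)\bigr).
\]
Since $\sum_i a_i^2/b^2=\Var(S)/(2b^2)=\alpha^2/2$, setting $u:=b\theta$, $z:=2t/\alpha$, and using $x=t\sqrt{\Var(S)}=t\alpha b$ collapses the Chernoff bound to
\[
\log\p{S>x}\;\le\;\frac{\alpha^2}{2}\bigl(-zu-\log(1-u^2)\bigr).
\]
Minimizing the right-hand side over $u\in(0,1)$ at $u^*=(\sqrt{1+z^2}-1)/z$, the same optimizer that arises in the Cram\'er transform of a single Laplace, produces exactly $-h(z)=-h(2t/\alpha)$, yielding the claimed upper bound.

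For the lower bound, I relabel so that $a_1=b$ and split $S=a_1X_1+S'$ with $S':=\sum_{i\ge 2}a_iX_i$ independent of $X_1$. Because each $X_i$ is symmetric about $0$, so is $S'$, and therefore $\p{S'\ge 0}\ge 1/2$. Independence together with the event inclusion $\{a_1X_1>x\}\cap\{S'\ge 0\}\subseteq\{S>x\}$ then gives
\[
\p{S>x}\;\ge\;\p{a_1X_1>x}\,\p{S'\ge 0}\;\ge\;\tfrac12 e^{-x/a_1}\cdot\tfrac12\;=\;\tfrac14\,e^{-\alpha t},
\]
using $x/a_1=t\alpha$ and $\p{X_1>r}=\tfrac12 e^{-r}$ for $r>0$. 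Since $\alpha\ge\sqrt 2$ (with equality at $n=1$) and $t>1$ give $\sqrt{\alpha t}>1>4/57$, we have $\tfrac14\ge 1/(57\sqrt{\alpha t})$, so the stated lower bound follows (in fact, with room to spare).

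The main obstacle is the convexity comparison for $-\log(1-y)$ in the upper bound: without it one faces a genuine $n$-dimensional Chernoff minimization, while with it the problem reduces to a single scalar $u$ and the identification of the Legendre transform with $h$ becomes routine algebra. By contrast, the lower bound is essentially immediate from the symmetry of $S'$, which reflects the underlying intuition that a single summand with the largest coefficient governs the tail.
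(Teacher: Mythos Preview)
Your proof is correct. The upper bound follows exactly the paper's route: Chernoff's inequality, the convexity bound $-\log(1-a_i^2\theta^2)\le (a_i^2/b^2)\bigl(-\log(1-b^2\theta^2)\bigr)$, and the one-variable optimization yielding $h(2t/\alpha)$.

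Your lower bound, however, is genuinely different from the paper's and in fact \emph{stronger}. The paper represents each $X_i$ as a Gaussian mixture $\sqrt{2Y_i}\,G_i$ with $Y_i$ exponential, so that $S$ has the law of $\bigl(2\sum a_i^2 Y_i\bigr)^{1/2}G$; it then splits into the cases $t\ge\alpha$ and $t<\alpha$, invoking Janson's one-sided exponential bound for the $Y_i$-part and the standard Gaussian tail for $G$, together with a Paley--Zygmund inequality. This yields the stated $\frac{1}{57}(\alpha t)^{-1/2}e^{-\alpha t}$. Your argument---isolating the dominant summand $a_1X_1$ and using only the symmetry of $S'$---gives the cleaner bound $\tfrac14 e^{-\alpha t}$, with no polynomial loss. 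What the paper's approach buys is generality: it extends verbatim to sums $\sum a_i\sqrt{Y_i}\,G_i$ with $Y_i$ gamma (or other nonnegative laws covered by their Theorem~\ref{thm:gene}), where the summands need not be two-sided exponentials and no simple symmetry shortcut is available. For the specific statement at hand, though, your route is both shorter and sharper.
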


In \eqref{eq:sym-exp}, as $t \to \infty$, the lower and the upper bounds are of the same order, $e^{-\alpha t + o(t)}$ (plainly, $h(u) = u + o(u)$).

Our proof of Theorem \ref{thm:sym-exp} presented in Section \ref{sec:proof} is based on an observation that two-sided exponentials are Gaussian mixtures, allowing to leverage \eqref{eq:J} (this idea has recently found numerous uses in convex geometry, see \cite{Esk, ENT, NT}). In Section \ref{sec:gen}, we provide further generalisations of Janson's inequality \eqref{eq:J} to certain nonnegative distributions, which also allows to extend Theorem \ref{thm:sym-exp} to a more general framework. We finish in Section \ref{sec:rem} with several remarks (for instance, we deduce from \eqref{eq:sym-exp} a formula for moments of $S$).

\subsection*{Acknowledgements}
We are indebted to an anonymous referee for many valuable comments, leading in particular to the remarks in Sections \ref{sec:S-ineq} and \ref{sec:heavy-tailed}.

\section{Proof of Theorem \ref{thm:sym-exp}}\label{sec:proof}

For the upper bound, we begin with a standard Chernoff-type calculation. Denote $\sigma = \sqrt{\Var(S)} = \sqrt{2\sum a_i^2}$. For $\theta > 0$, we have
\[
\p{S \geq t \sigma} \leq e^{-\theta t \sigma}\E e^{\theta S}
\]
and
\[
\E e^{\theta S} = \prod \E e^{\theta a_iX_i} = \prod \frac{1}{1-\theta^2a_i^2} = \exp\left\{-\sum \log(1-\theta^2a_i^2)\right\},
\]
for $\theta < \frac{1}{a_*}$, $a_* = \max_{i \leq n} a_i$. By convexity,
\[
-\sum \log(1-\theta^2a_i^2) \leq -\sum \frac{a_i^2}{a_*^2}\log(1-\theta^2a_*^2),
\]
so changing $\theta$ to $\theta/a_*$, for every $0 < \theta < 1$, we have
\[
\p{S \geq t \sigma} \leq \exp\left\{-\theta t\alpha -\frac{\alpha^2}{2}\log(1-\theta^2)\right\} = \exp\left\{-\frac{\alpha^2}{2}\left(\frac{2t}{\alpha}\theta + \log(1-\theta^2\right)\right\},
\]
where $\alpha = \frac{\sigma}{a_*}$.
Optimising over $\theta$ and using
\[
\sup_{\theta \in (0,1)} \Big(\theta u + \log(1-\theta^2)\Big) = \sqrt{1+u^2}-1-\log\frac{1+\sqrt{1+u^2}}{2}, \qquad u > 0
\]
gives the upper bound in \eqref{eq:sym-exp} and thus finishes the argument.

For the lower bound, we shall use that a standard two-sided exponential random variable with density $\frac{1}{2}e^{-|x|}$, $x \in \R$, has the same distribution as $\sqrt{2Y}G$, where $Y$ is an exponential random variable with mean $1$ and $G$ is a standard Gaussian random variable independent of $Y$ (this follows for instance by checking that the characteristic functions are the same; see also a remark following Lemma 23 in \cite{ENT}). This and the fact that sums of independent Gaussians are Gaussian justify the following claim, central to our argument.

\begin{proposition}
The sum $S = \sum_{i=1}^n a_iX_i$ has the same distribution as
$
(2\sum_{i=1}^n a_i^2Y_i)^{1/2} G
$
with $Y_1,\ldots, Y_n$ being independent mean $1$ exponential random variables, independent of the standard Gaussian $G$. \label{prop:exp-decomp}
\end{proposition}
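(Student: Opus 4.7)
The author has already supplied the key ingredient: each standard two-sided exponential $X_i$ admits the representation $X_i \stackrel{d}{=} \sqrt{2Y_i}\,G_i$, where $Y_i$ is exponential with mean $1$ and $G_i$ is an independent standard Gaussian. My plan is simply to combine this representation for all $i$ with the stability of the Gaussian family under independent sums.

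First I would take independent pairs $(Y_1,G_1),\ldots,(Y_n,G_n)$ with each $Y_i$ exponential of mean $1$ and each $G_i$ standard Gaussian, with all $2n$ variables jointly independent. By the mixture representation quoted from \cite{ENT} and the independence of the $X_i$'s, the vector $(\sqrt{2Y_i}\,G_i)_{i=1}^n$ has the same joint distribution as $(X_1,\ldots,X_n)$. In particular,
\[
S \stackrel{d}{=} \sum_{i=1}^n a_i \sqrt{2Y_i}\, G_i.
\]

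Next I would condition on the $\sigma$-algebra $\mathcal{F}=\sigma(Y_1,\ldots,Y_n)$. Since $G_1,\ldots,G_n$ are independent of $\mathcal{F}$ and of each other, the conditional distribution of $\sum_{i=1}^n a_i \sqrt{2Y_i}\,G_i$ given $\mathcal{F}$ is that of a sum of independent Gaussians with variances $2a_i^2 Y_i$, hence a centered Gaussian with variance $2\sum_{i=1}^n a_i^2 Y_i$. Equivalently, conditionally on $\mathcal{F}$, this sum has the same law as $\bigl(2\sum_{i=1}^n a_i^2 Y_i\bigr)^{1/2} G$ for a standard Gaussian $G$ independent of $\mathcal{F}$. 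Since the conditional law matches for every realization of $\mathcal{F}$, the unconditional laws agree as well, which is precisely the claim.

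There is no genuine obstacle here: the only nontrivial point is the single-variable mixture identity $X_i \stackrel{d}{=} \sqrt{2Y_i}\,G_i$, which can be verified directly, for instance by checking that both sides have characteristic function $(1+\xi^2)^{-1}$ (conditioning on $Y_i$ gives $\E e^{i\xi\sqrt{2Y_i}G_i} = \E e^{-\xi^2 Y_i} = (1+\xi^2)^{-1}$), as noted in the excerpt. Once that is in hand, the proposition is a one-line consequence of Gaussian stability.
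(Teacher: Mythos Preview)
Your argument is correct and follows exactly the route the paper indicates: the single-variable mixture identity $X_i\stackrel{d}{=}\sqrt{2Y_i}\,G_i$ combined with Gaussian stability (via conditioning on the $Y_i$'s) yields the claim. The paper itself does not spell out the conditioning step, so your write-up is simply a fleshed-out version of its one-line justification.
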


Recall $\alpha = \frac{\sigma}{\max a_i}$. Fix $t > 1$. By Proposition \ref{prop:exp-decomp}, for $\theta > 0$, we have
\begin{align*}
\p{S \geq t\sigma} = \p{\sqrt{2\sum a_i^2Y_i}G \geq t\sigma} &\geq \p{\sqrt{2\sum a_i^2Y_i} \geq \sqrt{\theta t\sigma^2}, \ G \geq \sqrt{\theta^{-1}t}} \\
&= \p{\sum a_i^2Y_i \geq \frac{1}{2}\theta t\sigma^2}\p{G \geq \sqrt{\theta^{-1}t}}.
\end{align*}

\emph{Case 1.} $t \geq \alpha$. With hindsight, choose $\theta = \frac{1}{\alpha}$. Applying \eqref{eq:J} to the first term yields
\[
\p{\sum a_i^2Y_i \geq \frac{1}{2}\theta t\sigma^2} = \p{\sum a_i^2Y_i \geq \frac{t}{\alpha}\sum a_i^2} \geq \frac{1}{e\alpha^2}\exp\left\{-\frac{\alpha^2}{2}\left(\frac{t}{\alpha}-1\right)\right\}.
\]
For the second term we use a standard bound on the Gaussian tail,
\begin{align}
\p{G > u} &\geq \frac{1}{\sqrt{2\pi}}\frac{u}{u^2+1}e^{-u^2/2}, \qquad\   u > 0, \notag\\
&\geq \frac{1}{2\sqrt{2\pi}}\frac{1}{u}e^{-u^2/2}, \qquad\qquad u \geq 1.\label{eq:Gauss-tail}
\end{align}
and as $\theta^{-1}t= \alpha t \geq \sqrt{2}$, (\ref{eq:Gauss-tail}) applies in our case.
Combining the above estimates gives
\begin{align*}
\p{S \geq t\sigma} \geq \frac{\exp(\alpha^2/2)}{2\sqrt{2\pi}e\alpha^2}\frac{1}{\sqrt{\alpha t}}\exp\big(-\alpha t\big) \geq \frac{1}{4\sqrt{2\pi}}\frac{1}{\sqrt{\alpha t}}\exp\big(-\alpha t\big),
\end{align*}
where in the last inequality we use that $\inf_{x > 1}\frac{1}{x}e^{x/2} = \frac{e}{2}$.

\emph{Case 2.} $t \leq \alpha$. With hindsight, choose $\theta = \frac{1}{t}$. Then
\[
\p{\sum a_i^2Y_i \geq \frac{1}{2}\theta t\sigma^2} = \p{\sum a_i^2Y_i \geq \sum a_i^2}.
\]
To further lower-bound the last expresion, we use a standard Paley-Zygmund type inequality (see, e.g. Lemma 3.2 in \cite{Ole}).

\begin{lemma}\label{lm:PZ}
Let $Z_1, \ldots, Z_n$ be independent mean $0$ random variables such that $\E Z_i^4 \leq C(\E Z_i^2)^2$ for all $1\leq i \leq n$ for some constant $C \geq 1$. Then for $Z = Z_1+\dots+Z_n$,
\[
\p{Z \geq 0} \geq \frac{1}{16^{1/3}\max\{C,3\}}.
\]
\end{lemma}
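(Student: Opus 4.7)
The plan is to first establish the moment comparison $\E Z^4 \leq C'(\E Z^2)^2$ with $C' = \max\{C,3\}$, and then extract the probabilistic bound via H\"older's inequality, using crucially that $\E Z = 0$.

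For the moment comparison, I would expand $\E Z^4 = \sum_i \E Z_i^4 + 3\sum_{i\neq j}\E Z_i^2 \E Z_j^2$; independence together with $\E Z_i = 0$ kills every mixed term containing an index of odd multiplicity. Writing $s_i = \E Z_i^2$, the assumption $\E Z_i^4 \leq C s_i^2$ turns the right-hand side into $(C-3)\sum_i s_i^2 + 3\big(\sum_i s_i\big)^2$. The elementary inequality $\sum_i s_i^2 \leq \big(\sum_i s_i\big)^2$ then handles the cases $C \leq 3$ and $C > 3$ uniformly, yielding $\E Z^4 \leq \max\{C,3\}(\E Z^2)^2$.

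For the second step, centering gives $\E Z\1_{Z>0} = -\E Z\1_{Z<0} = \tfrac{1}{2}\E|Z|$. H\"older's inequality with conjugate exponents $4$ and $4/3$ then yields
\[
\tfrac{1}{2}\E|Z| \;=\; \E Z\1_{Z>0} \;\leq\; (\E Z^4)^{1/4}\,\p{Z > 0}^{3/4}.
\]
To express the right-hand side in terms of second and fourth moments only, I would lower-bound $\E|Z|$ via Lyapunov's inequality (equivalently H\"older applied to $|Z|^2 = |Z|^{2/3}\cdot |Z|^{4/3}$ with exponents $3/2$ and $3$), which gives $(\E Z^2)^3 \leq (\E|Z|)^2\,\E Z^4$. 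Substituting and invoking the moment comparison above,
\[
\p{Z > 0}^{3/4} \;\geq\; \frac{(\E Z^2)^{3/2}}{2(\E Z^4)^{3/4}} \;\geq\; \frac{1}{2\,\max\{C,3\}^{3/4}},
\]
and raising to the $4/3$ power produces $\p{Z \geq 0}\geq \p{Z>0} \geq 1/(2^{4/3}\max\{C,3\}) = 1/(16^{1/3}\max\{C,3\})$.

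The main obstacle is selecting the correct H\"older exponent. A cruder Cauchy--Schwarz bound on $\E Z\1_{Z>0}$ uses only the second moment and gives the worse constant $4$ in place of $16^{1/3}\approx 2.52$. Exponent $4$ is the largest the hypothesis supports, and it is precisely matched by the Lyapunov lower bound on $\E|Z|$, so that the $\E Z^2$ factors cancel exactly and the fourth-moment estimate is used once with the optimal weight.
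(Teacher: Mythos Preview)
Your argument is correct and mirrors the paper's proof essentially line for line: the same fourth-moment expansion yielding $\E Z^4 \leq \max\{C,3\}(\E Z^2)^2$, the same H\"older bound $\tfrac12\E|Z| \leq (\E Z^4)^{1/4}\p{Z\geq 0}^{3/4}$, and the same Lyapunov lower bound $\E|Z| \geq (\E Z^2)^{3/2}/(\E Z^4)^{1/2}$. The only cosmetic difference is that you work with $\{Z>0\}$ rather than $\{Z\geq 0\}$, which requires (and you implicitly assume) disposing of the degenerate case $Z\equiv 0$ separately.
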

\begin{proof}
We can assume that $\p{Z = 0} < 1$. Since $Z$ has mean $0$,
\[
\E|Z| = 2\E Z\1_{Z \geq 0} \leq 2(\E Z^4)^{1/4}\p{Z \geq 0}^{3/4}.
\]
Moreover, by H\"older's inequality, $\E|Z| \geq \frac{(\E Z^2)^{3/2}}{(\E Z^4)^{1/2}}$, so
\[
\p{Z \geq 0} \geq 16^{-1/3}\frac{(\E Z^2)^2}{\E Z^4}.
\]
Using independence, $\E Z_i = 0$ and the assumption $\E Z_i^4 \leq C(\E Z_i^2)^2$, we have
\begin{align*}
\E Z^4 = \sum_{i=1}^n \E Z_i^4 + 6\sum_{i < j}\E Z_i^2\E Z_j^2 &\leq \max\{C,3\}\left(\sum_{i=1}^n (\E Z_i^2)^2 + 2\sum_{i < j}\E Z_i^2\E Z_j^2 \right) \\
&= \max\{C,3\}(\E Z^2)^2.
\end{align*}
\end{proof}

Take $Z_i = a_i(Y_i - 1)$. We have, $\E(Y_i-1)^2 = 1$, $\E(X_i-\gamma)^4 = 9$. Thus we can apply Lemma \ref{lm:PZ} with $C = 9$ and obtain
\begin{equation}\label{eq:S>ES}
\p{\sum a_i^2Y_i \geq \sum a_i^2} \geq \frac{1}{9\cdot 16^{1/3}}.
\end{equation}
By \eqref{eq:Gauss-tail},
\[
\p{G \geq \sqrt{\theta^{-1}t}} = \p{G \geq t} \geq \frac{1}{2\sqrt{2\pi}}\frac{1}{t}e^{-t^2/2} \geq \frac{1}{2\sqrt{2\pi}}\frac{1}{\sqrt{\alpha t}}e^{-\alpha t/2},
\]
where in the last inequality we use that in this case $t \leq \sqrt{\alpha t}$. Moreover, since $\alpha t \geq \sqrt{2}$, $e^{-\alpha t/2}\geq e^{1/\sqrt{2}}e^{-\alpha t}$. Thus,
\[
\p{S \geq t\sigma} \geq \frac{e^{1/\sqrt{2}}}{18\cdot 16^{1/3}\sqrt{2\pi}}\frac{1}{\sqrt{\alpha t}}\exp\big(-\alpha t\big) > \frac{1}{57}\frac{1}{\sqrt{\alpha t}}\exp\big(-\alpha t\big).
\]
Combining Case 1 and 2 finishes the proof of the lower bound in \eqref{eq:sym-exp} and thus the proof proof of Theorem \ref{thm:sym-exp} is complete.\hfill$\square$

\section{Generalisations}\label{sec:gen}

In this section, we provide general tail bounds for weighted sums of nonnegative random variables which for certain distributions allow to capture the same bahaviour as featured in Janson's inequality \eqref{eq:J}, viz. asymptotically the sum has the same tail as the summand carrying the largest weight.

\begin{theorem}\label{thm:gene}
Let $X_1, \ldots, X_n$ be i.i.d. nonnegative random variables, $\mu = \E X_1$.
Let $S = \sum_{i=1}^n a_iX_i$ with $a_1,\ldots,a_n$ positive. For every $t > 1$,
\begin{equation}\label{eq:gene}
\p{S \geq \E S} r((t-1)\alpha\mu) \leq \p{S > t\E S} \leq \exp\left\{-\alpha I(\mu t)\right\},
\end{equation}
where $\alpha = \frac{\sum_{i=1}^n a_i}{\max_{i \leq n} a_i}$, for $v>0$,
\begin{equation}\label{eq:r}
r(v) = \inf_{u > 0} \frac{\p{X_1 > u+v}}{\p{X_1 > u}}
\end{equation}
and for $t > 0$,
\begin{equation}\label{eq:L}
I(t) = \sup_{\theta > 0}\left(t \theta - \log\E e^{\theta X_1}\right).
\end{equation}
\end{theorem}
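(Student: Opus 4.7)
My plan is to handle the two halves of \eqref{eq:gene} independently: a Chernoff-type exponential-moment bound for the upper estimate, and a pointwise conditioning argument driven by the defining inequality of $r$ for the lower one. In both parts it is convenient to single out an index $k$ with $a_k = a_* := \max_i a_i$ and to record $\E S = \mu\sum_i a_i = \mu\alpha a_*$.

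For the upper bound, set $\phi(\lambda) := \log\E e^{\lambda X_1}$. Independence gives $\log\E e^{\theta S} = \sum_i \phi(\theta a_i)$, and convexity of $\phi$ with $\phi(0)=0$ yields $\phi(\theta a_i) \leq (a_i/a_*)\phi(\theta a_*)$, so that $\log\E e^{\theta S} \leq \alpha\phi(\theta a_*)$. Plugging this into the exponential Markov inequality and substituting $\theta' = \theta a_*$,
\[
\p{S \geq t\E S} \leq \exp\bigl(-\alpha(\theta' t\mu - \phi(\theta'))\bigr),
\]
and optimising over $\theta'>0$ produces $\exp(-\alpha I(\mu t))$ via \eqref{eq:L}.

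For the lower bound, I would condition on $T := S - a_k X_k$. Given $T = s$, set $x := (\E S - s)/a_k$ and $v := (t-1)\E S/a_k = (t-1)\mu\alpha$; then the events $\{S \geq \E S\}$ and $\{S \geq t\E S\}$ become $\{X_k \geq x\}$ and $\{X_k \geq x+v\}$ respectively. On $\{T < \E S\}$ (so that $x > 0$) the defining inequality \eqref{eq:r} gives the pointwise ratio comparison $\p{S \geq t\E S \mid T} \geq r(v)\,\p{S \geq \E S \mid T}$ directly; on $\{T \geq \E S\}$ the event $\{S \geq \E S\}$ is automatic, while the conditional probability of $\{S \geq t\E S\}$ is at least $\p{X_k \geq v}$, which dominates $r(v)$ by letting $u \to 0^+$ in \eqref{eq:r} (modulo a harmless atom-at-zero adjustment). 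Integrating the resulting conditional comparison over $T$ delivers the bound $r((t-1)\alpha\mu)\p{S \geq \E S}$.

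The only genuine obstacle is tidying up the boundary case $T \geq \E S$ together with the limiting behaviour of the quotient in \eqref{eq:r} as $u\to 0^+$; once this is verified, both directions reduce essentially to one-line computations followed by a standard optimisation.
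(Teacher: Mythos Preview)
Your proof is correct and essentially identical to the paper's: the same convexity bound $\phi(\theta a_i)\le (a_i/a_*)\phi(\theta a_*)$ on the cumulant generating function for the upper estimate, and the same conditioning on $T=S-a_kX_k$ for the lower one. The paper packages the latter as a one-line lemma (if $\p{Y\ge u+v}\ge r(v)\p{Y\ge u}$ for all $u\in\R$, then the same holds for $X+Y$ with $X$ independent) and replaces your case split $T<\E S$ versus $T\ge \E S$ by the single observation that the infimum in \eqref{eq:r} is unchanged when taken over all $u\in\R$; the atom-at-zero caveat you flag is glossed over there as well.
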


Before presenting the proof, we look at the example of the exponential and gamma distribution.

\subsection{Examples}

When the $X_i$ are exponential rate $1$ random variables, $I(t) = t-1-\log t$, $r(v) = e^{-v}$, $\p{S \geq \E S} \geq \frac{1}{9\cdot 16^{1/3}}$ (see \eqref{eq:S>ES}) and we obtain
\[
\frac{1}{9\cdot 16^{1/3}}e^{-\alpha (t-1)} \leq \p{S > t\E S} \leq e^{-\alpha(t-1-\log t)}.
\]
Comparing with \eqref{eq:J}, the extra factor $\frac1t$ in the upper bound was obtained in \cite{Jan} through rather delicate computations for the moment generating function specific for the exponential distribution. Since $\alpha \geq 1$, our lower bound up to a universal constant recovers the one from \eqref{eq:J} (improves on it as long as $\alpha > 9\cdot 16^{1/3}/(2e)$ and is worse otherwise). Along the same lines, for the gamma distribution with parameter $\gamma > 0$ (i.e. with density $\Gamma(\gamma)^{-1}x^{\gamma - 1}e^{-x}$, $x > 0$), we have $\mu = \gamma$, $I(t\mu) = \gamma(t-1-\log t)$ and with some extra work,
\[
r_\gamma(v) = \begin{cases} \frac{1}{2\Gamma(\gamma)}\min\{v^{\gamma-1},1\}e^{-v}, & 0 < \gamma < 1, \\ e^{-v}, & \gamma \geq 1. \end{cases}
\]
Moreover, via Lemma \ref{lm:PZ}, $\p{S \geq \E S} > \frac{1}{3\cdot 16^{1/3}(1+2\gamma^{-1})}$. Then \eqref{eq:gene} yields 
\begin{equation}\label{eq:gamma}
\frac{1}{3\cdot 16^{1/3}(1+2\gamma^{-1})} r_\gamma\big(\alpha\gamma(t-1)\big) \leq \p{S > t\E S} \leq \exp\big(-\alpha\gamma(t-1-\log t)\big).
\end{equation}
In particular,
$\p{S > t \E S} = \exp\{-\alpha \gamma t + o(t)\}$ as $t \to \infty$. It would perhaps be interesting to find a larger class of distributions for which the upper and lower bounds from \eqref{eq:gene} are asymptotically tight.  For more precise results involving the variance of $S$ for weighted sums of independent Gamma random variables (not necessarily with the same parameter), we refer to Theorem 2.57 in \cite{BDR}.

\subsection{Proof of Theorem \ref{thm:gene}: the upper bound}

For the log-moment generating function $\psi\colon\R\to (-\infty,\infty]$,
\[
\psi(u) = \log \E e^{u X_1}, \qquad u \in \R,
\]
we have $\psi(0) = 0$, $\psi$ is convex (by H\"older's inequality). Thus, by the monotonicity of slopes of convex functions,
\begin{equation}\label{eq:key}
\R \ni u \mapsto \frac{\psi(u)}{u} \ \ \text{is nondecreasing}.
\end{equation}
This is what Janson's proof specified to the case of exponentials relies on.
We turn to estimating the tails (using of course Chernoff-type bounds). 
Fix $t > 1$. For $\theta > 0$, we have
\begin{align*}
\p{S \geq t \E S} = \p{e^{\theta S} \geq e^{\theta t \E S}} \leq e^{-\theta t \E S} \E e^{\theta S} &= e^{-\theta t \E S} \prod_{i=1}^n \E e^{\theta a_iX_i}\\
&= \exp\left\{-\theta t \E S + \sum_{i=1}^n \psi(\theta a_i)\right\}.
\end{align*}
Let $a_* = \max_{i \leq n} a_i$. Thanks to \eqref{eq:key},
\[
\sum_{i=1}^n \psi(\theta a_i) = \sum_{i=1}^n (\theta a_i)\frac{\psi(\theta a_i)}{\theta a_i} \leq \sum_{i=1}^n (\theta a_i)\frac{\psi(\theta a_*)}{\theta a_*} = \frac{\sum_{i=1}^n a_i}{a_*}\psi(\theta a_*) = \alpha\psi(\theta a_*),
\]
where we set $\alpha = \frac{\sum_{i=1}^n a_i}{a_*}$. Note $\E S = \mu\sum a_i = \mu\alpha a_*$. We obtain,
\[
\p{S \geq t \E S} \leq \exp\left\{-\theta t \E S + \alpha\psi(\theta a_*)\right\} = \exp\left\{-\alpha\left(t\mu \theta a_*  - \psi(\theta a_*)\right)\right\},
\]
so optimising over $\theta$ gives the upper bound of \eqref{eq:gene}.\hfill$\square$

\subsection{Proof of Theorem \ref{thm:gene}: the lower bound}

We follow a general idea from \cite{Jan}. The whole argument is based on the following simple lemma.

\begin{lemma}\label{lm:decay-sum}
Suppose $X$ and $Y$ are independent random variables and $Y$ is such that $\p{Y \geq u+v} \geq r(v)\p{Y \geq u}$ for all $u \in \R$ and $v > 0$, for some function $r(v)$. Then $\p{X+Y \geq u+v} \geq r(v) \p{X+Y \geq u}$ for all $u \in \R$ and $v> 0$.
\end{lemma}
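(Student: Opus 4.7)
The plan is to prove the lemma by conditioning on $X$ and applying the assumed decay bound pointwise to the conditional probability involving $Y$.

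First I would write, using independence of $X$ and $Y$ and Fubini,
\[
\p{X+Y \geq u+v} = \int \p{Y \geq u+v-x}\,\dd F_X(x),
\]
where $F_X$ denotes the distribution of $X$. For each fixed $x \in \R$, apply the hypothesis with $u$ replaced by $u-x$ to obtain the pointwise bound
\[
\p{Y \geq u+v-x} \geq r(v)\,\p{Y \geq u-x}.
\]
Integrating this against $\dd F_X(x)$ and pulling the constant $r(v)$ out of the integral yields
\[
\p{X+Y \geq u+v} \geq r(v)\int \p{Y \geq u-x}\,\dd F_X(x) = r(v)\,\p{X+Y \geq u},
\]
which is exactly the desired inequality.

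There is essentially no obstacle here; the statement is a routine ``stability under convolution'' property of the multiplicative tail-decay condition. The only small point to be careful about is that the hypothesis is assumed for all real $u$ (not just positive $u$), which is precisely what allows the substitution $u \mapsto u - x$ for every $x$ in the support of $X$, regardless of sign. I would mention this explicitly so the reader sees where the full range of $u$ is used.
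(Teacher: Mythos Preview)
Your proof is correct and is essentially identical to the paper's: both condition on $X$ (you via $\int \cdots \,\dd F_X(x)$, the paper via $\E_X\mathbb{P}_Y(\cdot)$), apply the hypothesis with $u$ replaced by $u-x$, and pull $r(v)$ outside. Your added remark on why the hypothesis must hold for all real $u$ is a nice touch that the paper leaves implicit.
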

\begin{proof}
By independence, conditioning on $X$, we get
\begin{align*}
\p{X+Y \geq u+v} = \E_X\mathbb{P}_Y(Y \geq u-X + v) &\geq r(v)\E_X\mathbb{P}_Y(Y \geq u-X) \\
&= r(v)\p{X+Y \geq u}.
\end{align*}
\end{proof}

Let $S = \sum_{i=1}^n a_iX_i$ be the weighted sum of i.i.d. random variables and without loss of generality let us assume $a_1 = \max_{i \leq n} a_i$. Fix $t > 1$. We write $S = S' + a_1X_1$, with $S'=\sum_{i=2}^n a_i X_i$. Note that the definition of function $r$ from \eqref{eq:r} remains unchanged if the infimum is taken over all $u \in \R$ (since $X_1$ is nonnegative). Thus  Lemma \ref{lm:decay-sum} gives
\begin{align*}
\p{S \geq t\E S} &= \p{S \geq \E S + (t-1)\E S} \geq r\left((t-1)\frac{\E S}{a_1}\right)\p{S \geq \E S},
\end{align*}
as desired.\hfill$\square$

\section{Further remarks}\label{sec:rem}

\subsection{Moments}

The upper bound from \eqref{eq:sym-exp} allows us to recover precise estimates for moments (a special case of Gluskin and Kwapie\'n results from \cite{GK}), with a straightforward proof. Here and throughout, $\|a\|_p = (\sum_{i=1}^n |a_i|^p)^{1/p}$ denotes the $p$-norm of a sequence $a = (a_1,\ldots,a_n)$, $p > 0$, and $\|a\|_\infty = \max_{i \leq n} |a_i|$.
\begin{theorem}[Gluskin and Kwapie\'n, \cite{GK}]\label{thm:mom}
Under the assumptions of Theorem \ref{thm:sym-exp}, for every $p \geq 2$,
\begin{equation}\label{eq:mom}
\frac{\sqrt{2e}}{\sqrt{2e}+1}\big(p\|a\|_\infty + \sqrt{p}\|a\|_2\big) \leq \left(\E\left|S\right|^p\right)^{1/p} \leq 4\sqrt{2}\big(p\|a\|_\infty + \sqrt{p}\|a\|_2\big).
\end{equation}
\end{theorem}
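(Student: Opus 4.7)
The plan is to derive the upper bound by integrating the tail estimate from Theorem~\ref{thm:sym-exp}, and to derive the lower bound from the Gaussian mixture representation of Proposition~\ref{prop:exp-decomp}.

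For the upper bound, I first use the symmetry of $S$ together with integration by parts to write
\[
\E|S|^p = 2p\int_0^\infty t^{p-1}\p{S>t}\,\dd t.
\]
After substituting $t=\sigma s$ with $\sigma = \sqrt{2}\|a\|_2$, I would estimate by $\p{S>t} \leq 1$ on $s \in (0,1]$ and apply the Theorem~\ref{thm:sym-exp} tail bound on $s > 1$. The key calculus step is a pointwise lower bound $h(u) \geq \tfrac12(\sqrt{1+u^2}-1)$ (immediate from $\log x \leq x-1$), which gives $h(u) \gtrsim u^2$ for $u \leq 1$ and $h(u) \gtrsim u$ for $u \geq 1$. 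Splitting the resulting integral at $s = \alpha/2$, the Gaussian piece $\int_1^{\alpha/2} s^{p-1}e^{-cs^2}\,\dd s$ is controlled by $\Gamma(p/2)$ up to an absolute $p$-th-power factor, while the exponential piece $\int_{\alpha/2}^\infty s^{p-1} e^{-c\alpha s}\,\dd s$ is controlled by $\alpha^{-p}\Gamma(p)$. Taking $p$-th roots and applying Stirling to $\Gamma(p/2)^{1/p}$ and $\Gamma(p)^{1/p}$ produces respectively the terms $\sqrt{p}\|a\|_2$ and $p\|a\|_\infty$.

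For the lower bound, I would apply Proposition~\ref{prop:exp-decomp} to write $S$ in distribution as $\xi G$, where $\xi = (2\sum a_i^2 Y_i)^{1/2}$ and $G$ is an independent standard Gaussian. By independence, $\E|S|^p = \E\xi^p \cdot \E|G|^p$. Two complementary lower bounds on $\E\xi^p$ are at hand. Since $p \geq 2$, Jensen applied to the convex map $x\mapsto x^{p/2}$ gives $\E\xi^p \geq (2\|a\|_2^2)^{p/2}$; retaining only the summand with the largest weight inside the sum gives $\E\xi^p \geq (\sqrt{2}\|a\|_\infty)^p\, \Gamma(p/2+1)$. Combining each separately with the standard Stirling-based lower bounds on $(\E|G|^p)^{1/p}$ and $\Gamma(p/2+1)^{1/p}$ yields two inequalities
\[
(\E|S|^p)^{1/p} \geq c_1 \sqrt{p}\|a\|_2, \qquad (\E|S|^p)^{1/p} \geq c_2\, p\|a\|_\infty,
\]
for explicit constants $c_1,c_2 > 0$. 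These are merged using the elementary observation that if $X\geq c_1 A$ and $X\geq c_2 B$, then $X \geq \frac{c_1c_2}{c_1+c_2}(A+B)$ (take the convex combination with weights $c_2/(c_1+c_2)$ and $c_1/(c_1+c_2)$), producing the final bound in the additive form $p\|a\|_\infty + \sqrt{p}\|a\|_2$.

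The conceptual steps are routine; the main obstacle is precise constant-tracking. To hit the advertised $\sqrt{2e}/(\sqrt{2e}+1)$ one must use sharp Stirling-type forms for both $(\E|G|^p)^{1/p}$ and $\Gamma(p/2+1)^{1/p}$ so that the combined constant $c_1c_2/(c_1+c_2)$ takes the claimed value, and to hit $4\sqrt{2}$ on the upper side one must carefully compare the truncated Gamma integrals with $\sqrt{p}$ and $p$ respectively rather than settle for crude Stirling estimates.
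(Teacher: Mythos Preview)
Your proposal is correct and follows essentially the same route as the paper: integrate the tail estimate from Theorem~\ref{thm:sym-exp} using a quadratic-then-linear lower bound on $h$ for the upper bound, and use Proposition~\ref{prop:exp-decomp} together with Jensen and Stirling for the lower bound, then merge the two pieces via the convex-combination trick $\max\{c_1A,c_2B\}\geq \frac{c_1c_2}{c_1+c_2}(A+B)$. The only minor variation is that for the $p\|a\|_\infty$ piece the paper applies Jensen directly to $S$ (conditioning out all but the largest-weight summand to get $\E|S|^p \geq \|a\|_\infty^p\,\Gamma(p+1)$) rather than dropping terms inside $\xi$; both routes yield the constant $1/e$.
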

\begin{proof}
For the upper bound, letting $\tilde S = \frac{S}{\sqrt{\Var(S)}}$ and using \eqref{eq:sym-exp}, we get
\begin{align*}
\E|\tilde S|^p = \int_0^\infty pt^{p-1}\p{|\tilde S|>t} \dd t \leq \int_0^1 pt^{p-1} \dd t + 2\int_1^\infty pt^{p-1}\exp\left(-\frac{\alpha^2}{2}h\left(\frac{2t}{\alpha}\right)\right)\dd t.
\end{align*}
We check that as $u$ increases, $h(u)$ behaves first quadratically, then linearly. More precisely,
\begin{equation}\label{eq:h-low-bd}
h(u) \geq \frac{1}{5}u^2, \quad u \in (0,\sqrt{2}), \qquad h(u) \geq \frac{1}{4}u, \quad u \in (\sqrt{2},\infty).
\end{equation}
Thus the second integral $\int_1^\infty \ldots \dd t$ can be upper bounded by (recall that $\Var(S) = 2\|a\|_2^2$, $\frac{\alpha}{\sqrt{2}} = \frac{\|a\|_2}{\|a\|_\infty} > 1$),
\begin{align*}
&\int_1^{\alpha/\sqrt{2}} pt^{p-1}\exp\left(-\frac{\alpha^2}{2}\frac{1}{5}\left(\frac{2t}{\alpha}\right)^2\right)\dd t + \int_{\alpha/\sqrt{2}}^\infty pt^{p-1}\exp\left(-\frac{\alpha^2}{2}\frac{1}{4}\frac{2t}{\alpha}\right)\dd t \\
&\leq \int_0^\infty pt^{p-1}\exp\left(-\frac{2}{5}t^2\right)\dd t + \int_0^\infty pt^{p-1}\exp\left(-\frac{1}{4}\alpha t\right)\dd t \\
&= \left(\frac{5}{2}\right)^{p/2}\Gamma\left(\frac{p}{2}+1\right) + \left(\frac{4}{\alpha}\right)^p\Gamma(p+1).
\end{align*}
Using $\Gamma(x+1) \leq x^x$, $x \geq 1$, yields
\begin{align*}
\left(\E|S|^p\right)^{1/p}=\sqrt{2}\|a\|_2\left(\E|\tilde S|^p\right)^{1/p} &\leq \sqrt{2}\|a\|_2\left(1 + 2\left(\frac{5p}{4}\right)^{p/2} + 2\left(\frac{4p}{\alpha}\right)^p\right)^{1/p} \\
&\leq 4\sqrt{2}(p\|a\|_\infty + \sqrt{p}\|a\|_2).
\end{align*}

For the lower bound, suppose $a_1 = \|a\|_\infty$. Then, by independence and Jensen's inequality,
\[
\E|S|^p \geq \E\big|a_1X_1 + \E(a_2X_2+\dots+a_nX_n)\big|^p = a_1^p\E|X_1|^p = a_1^p\Gamma(p+1).
\]
Using $\Gamma(x+1)^{1/x} \geq x/e$, $x > 0$ (Stirling's formula, \cite{Ja}), this gives
\[
(\E|S|^p)^{1/p} \geq \frac{p}{e}\|a\|_\infty.
\]
On the other hand, by Proposition \ref{prop:exp-decomp}, and Jensen's inequality,
\[
\E|S|^p = \E\left(2\sum a_i^2Y_i\right)^{p/2}\E|G|^p \geq \left(2\sum a_i^2\right)^{p/2}\E|G|^p.
\]
Using $\E|G|^p \geq (p/e)^{p/2}$, $p \geq 1$ (again, by e.g. Stirling's approximation), we obtain
\[
(\E|S|^p)^{1/p} \geq \sqrt{\frac{2}{e}}\sqrt{p}\|a\|_2.
\]
Combining gives 
\[
(\E|S|^p)^{1/p} \geq \max\left\{\frac{1}{e}p\|a\|_\infty, \sqrt{\frac{2}{e}}\sqrt{p}\|a\|_2\right\}  \geq \frac{\sqrt{2e}}{\sqrt{2e}+1}\left(p\|a\|_\infty + \sqrt{p}\|a\|_2\right),
\]
which finishes the proof.
\end{proof}

\begin{remark}\label{rem:GK}
Using Markov and Payley-Zygmund type inequalities, it is possible to recover two-sided tail bounds from moment estimates (like \eqref{eq:mom}), but incurring loss of (universal) constants in the exponents, as it is done in e.g. \cite{GK}, or \cite{HMS}.
\end{remark}

\subsection{Upper bounds on upper tails from S-inequalities}\label{sec:S-ineq}
Let $S$ be as in \eqref{eq:J}. The upper bound in \eqref{eq:J} for $t=1$ is trivial, whereas as a result of  Lemma \ref{lm:PZ}, viz. \eqref{eq:S>ES}, we obtain $\p{S \geq \E S} \in (\frac{1}{24}, \frac{23}{24})$, where the upper bound $\frac{23}{24}$ is obtained by applying Lemma \ref{lm:PZ} to $-Z$. Letting $a > 0$ be such that $\p{S \geq \E S} = \p{X_1 \geq a} = e^{-a}$, by the S-inequality for the two-sided product exponential measure and the set $\{x \in \R^n, \ \sum a_i|x_i| \leq \E S\}$ (Theorem 2 in \cite{NT-sc}), we obtain that for every $t \geq 1$,
\begin{equation}\label{eq:up-bd-small-t}
\p{S \geq t\E S} \leq \p{X_1 \geq ta} = e^{-at} \leq \left(\frac{23}{24}\right)^{t}.
\end{equation}
This provides an improvement of \eqref{eq:J} for small enough $t$ (of course the point of \eqref{eq:J} is that it is optimal for large $t$). The same can be said about the upper bound in \eqref{eq:gamma} for $\gamma \geq 1$ (in view of \eqref{eq:S>ES} and the results from \cite{NT-s} for gamma distributions with parameter $\gamma \geq 1$). Complimentary to such concentration bounds are small ball probability estimates and anti-concentration phenomena, typically treating however the regime of $t = O(1/\E S)$ (under our normalisation). We refer for instance to the comprehensive survey \cite{NV} of Nguyen and Vu, as well as the recent work \cite{LM} of Li and Madiman for further results and references. Specific reversals of \eqref{eq:up-bd-small-t} concerning the exponential measure can be found e.g. in \cite{ENT} (Corollary 15), \cite{PV} (Proposition 3.4), \cite{PV2} ((5.5) and Theorem 5.7).

\subsection{Heavy-tailed distributions}\label{sec:heavy-tailed}
Janson's as well as this paper's techniques strongly rely on Chernoff-type bounds involving exponential moments to establish the \emph{largest-weight} summand tail asymptotics from \eqref{eq:J} or \eqref{eq:sym-exp}. Interestingly, when the exponential moments do \emph{not} exist, i.e. for heavy-tailed distributions, under some natural additional assumptions (subexponential distributions), a different phenomenon occurs: in the simplest case of i.i.d. summands, we have 
\[
\p{X_1 + \dots + X_n > t} = (1+o(1))\p{\max_{i \leq n} X_i > t} \qquad \text{as } t \to \infty,
\]
often called the single big jump or catastrophe principle. We refer to the monograph \cite{FKZ} (Chapters 3.1 and 5.1), as well as the papers \cite{Chen} and \cite{FKZ2} for extensions including weighted sums and continuous time respectively.

\subsection{Theorem \ref{thm:sym-exp} in a more general framework}
A careful inspection of the proof of Theorem \ref{thm:sym-exp} shows that thanks to Theorem 2.57 from \cite{BDR} (or the simpler but weaker bound \eqref{eq:gamma}), the former can be extended to the case where the $X_i$ have the same distribution as $\sqrt{Y_i}G_i$ with the $Y_i$ being i.i.d. gamma random variables and the $G_i$ independent standard Gaussian. For simplicity, we have decided to present it for the symmetric exponentials.

\end{document}